\newtheorem{defn}{{\bf Definition}}[section]
 \newtheorem{lemma}[defn]{{\bf
Lemma}} \newtheorem{prop}[defn]{{\bf Proposition}}
\newtheorem{theo}[defn]{{\bf Theorem}} \newtheorem{cor}[defn]{{\bf
Corollary}} \newtheorem{remark}[defn]{{\bf Remark}}
\font\bbb=msbm10 scaled\magstep1
\newcommand{\FF}{\mbox{\bbb F}}
\newcommand{\ZZ}{\mbox{\bbb Z}} 
\font\bbb=msbm8 scaled\magstep1
\newcommand{\TPSSD}{S^{\hspace{.2mm}d-1} \mbox{$\times
\hspace{-2.8mm}_{-}$} \, S^{\hspace{.1mm}1}}
 \newcommand{\TPPSS}{\kern.24em \rule width.08em height1.5ex
depth-.08ex \kern-.36em \times}
\newcommand{\lk}[2]{{\rm lk}_{#1}(#2)} 
\newcommand{\st}[2]{{\rm st}_{#1}(#2)}
\newcommand{\skel}[2]{{\rm skel}_{#1}{(#2)}}
\newcommand{\Kd}{{\mathcal{K}}(d)}
\newcommand{\lKd}{\overline{\mathcal{K}}(d)}
\begin{document}
\title{\bf Non-existence of tight neighborly triangulated manifolds with
\boldmath{$\beta_1=2$}}
\author{{\bf Nitin Singh}\\
\small  Department of Mathematics, Indian Institute of
Science, Bangalore 560\,012, India. \\
\small Email: nitin@math.iisc.ernet.in}

\date{To appear in `{\bf Advances in Geometry}'}

\maketitle
\vspace{-5mm}

\noindent{\bf Abstract:\ } {\small All triangulated $d$-manifolds
satisfy the inequality $\binom{f_0-d-1}{2} \geq \binom{d+2}{2}\beta_1$
for $d\geq 3$. A triangulated $d$-manifold is called \emph{tight
neighborly} if it attains equality in the bound. For each $d\geq 3$, a
$(2d+3)$-vertex tight neighborly triangulation of the
$S^{\,d-1}$-bundle over $S^1$ with $\beta_1=1$ was constructed by
K\"{u}hnel in 1986. In this paper, it is shown that there does not
exist a tight neighborly triangulated manifold with $\beta_1=2$.  In
other words, there is no tight neighborly triangulation of
$(S^{\,d-1}\!\times S^1)^{\#2}$ or $(\TPSSD)^{\#2}$ for $d\geq 3$. A
short proof of the uniqueness of K\"{u}hnel's complexes for $d\geq 4$,
under the assumption $\beta_1\neq 0$ is also presented.  }

\bigskip

\noindent {\small {\em MSC 2000\,:} 57Q15, 57R05.

\noindent {\em Keywords\,:} Stacked sphere; Triangulated
manifolds; Tight neighborly triangulation.  }

\section*{Introduction}

Tight neighborly triangulations were introduced by Lutz, Sulanke and
Swartz in \cite{LSS}. Using a result of Novik and Swartz \cite{NS},
the authors in \cite{LSS} obtained a lower bound on the minimum number
of vertices in a triangulation of a $d$-manifold in terms of its
$\beta_1$ coefficient (see Proposition \ref{prop:lss}).  Triangulations
that meet the lower bound are called {\em tight neighborly}. Thus
tight neighborly triangulations are vertex minimal triangulations.
Effenberger \cite{Effenberger} showed that for $d\geq 4$, tight
neighborly triangulated manifolds are $\ZZ_2$-tight. In conjunction
with a recent result of Bagchi and Datta \cite{BDTIGHT}, this implies
they are {\em strongly minimal} for $d\geq 4$. Apart from the
following classes of vertex-minimal triangulations, namely
\begin{itemize} 
\item the $(d+2)$-vertex triangulation of the $d$-sphere $S^d$, 
\item K\"{u}hnel's $(2d+3)$-vertex triangulations \cite{K86} of
$S^{\,d-1}\!\times S^1$ (for even $d$) and $\TPSSD$ (for odd $d$),
\end{itemize} 
very few examples of tight neighborly triangulations are known. A
first sporadic example, a $15$-vertex triangulation of a $4$-manifold
with $\beta_1=3$, was obtained by Bagchi and Datta \cite{BDW}.
Recently in \cite{BDNS}, we obtained tight neighborly triangulations
of $4$-manifolds with $21,26$ and $41$ vertices.  For $\beta_1=2$, the
parameters (integer solutions of the tight neighborliness condition) for
the first few possible tight neighborly triangulations are
$(f_0,d)=(35,13)$ and $(f_0,d)=(204,83)$. The main result of this
paper shows that such triangulations do not exist. In this article,
unless the field is explicitly stated, we assume
$\beta_1(X)=\beta_1(X;\ZZ_2)$.

\section{Preliminaries} 
All graphs considered here are simple (i.e., undirected with no loops
or multiple edges). For the standard terminology on graphs, see
\cite[Chapter 1]{Diestel} for instance. For a graph $G$, $V(G)$ and
$E(G)$ will denote its set of vertices and edges respectively. A graph
$G$ is said to be $k$-\emph{connected} if $|V(G)|\geq k+1$ and $G-U$
is connected for all $U\subseteq V(G)$ with $|U|<k$. It is easily seen
that for a $k$-connected graph $G$, $d_G(v)\geq k$ for all $v\in
V(G)$.

All simplicial complexes considered here are finite and abstract.
By a triangulated manifold/sphere/ball, we mean an abstract
simplicial complex whose geometric carrier is a topological
manifold/sphere/ball. We identify two complexes if they are
isomorphic. A $d$-dimensional simplicial complex is called {\em
pure} if all its maximal faces (called {\em facets}) are
$d$-dimensional. A $d$-dimensional pure simplicial complex is said
to be a {\em weak pseudomanifold} if each of its $(d - 1)$-faces
is in at most two facets. For a $d$-dimensional weak
pseudomanifold $X$, the {\em boundary} $\partial X$ of $X$ is the
pure subcomplex of $X$ whose facets are those $(d-1)$-dimensional
faces of $X$ which are contained in unique facets of $X$. The {\em
dual graph} $\Lambda(X)$ of a pure simplicial complex $X$ is the
graph whose vertices are the facets of $X$, where two facets are
adjacent in $\Lambda(X)$ if they intersect in a face of
codimension one. A {\em pseudomanifold} is a weak pseudomanifold
with a connected dual graph. All connected triangulated manifolds
are automatically pseudomanifolds.

If $X$ and $Y$ are simplicial complexes with disjoint vertex sets, we
define $X\ast Y$ to be the simplicial complex whose faces are the
(disjoint) unions of faces of $X$ with faces of $Y$. When $X$ consists
of a single vertex $x$, we write $x\ast Y$ for $X\ast Y$. For a face
$\alpha$ of $X$, the {\em link} of $\alpha$ in $X$, denoted by
$\lk{X}{\alpha}$ is the subcomplex of $X$ consisting of all faces $\beta$
such that $\alpha\cap \beta=\emptyset$ and $\alpha\cup \beta$ is a face of
$X$.
When $\alpha$ consists of a single vertex $v$, we write $\lk{X}{v}$ instead
of $\lk{X}{\{v\}}$. For a vertex $v$ of $X$, we define the {\em star} of
$v$ in $X$, denoted by $\st{X}{v}$ as the cone $v\ast \lk{X}{v}$. The
subcomplex of $X$ consisting of faces of dimension at most $k$ is called
the {\em $k$-skeleton} of $X$, and is denoted by $\skel{k}{X}$. By the {\em
edge graph} of a simplicial complex $X$, we mean its $1$-skeleton.

If $X$ is a $d$-dimensional simplicial complex then, for $0\leq j
\leq d$, the number of its $j$-faces is denoted by $f_j = f_j(X)$.
The vector $(f_0, \dots, f_d)$ is called the {\em face vector} of
$X$ and the number $\chi(X) := \sum_{i=0}^{d} (-1)^i f_i$ is
called the {\em Euler characteristic} of $X$. As is well known,
$\chi(X)$ is a topological invariant, i.e., it depends only on the
homeomorphic type of $|X|$. A simplicial complex $X$ is said to be
{\em $l$-neighbourly} if any $l$ vertices of $X$ form a face of
$X$. In this paper, by a {\em neighborly} complex, we shall mean a
$2$-neighborly complex.

A {\em standard $d$-ball} is a pure $d$-dimensional simplicial
complex with one facet. The standard ball with facet $\sigma$ is
denoted by $\overline{\sigma}$. A $d$-dimensional pure simplicial
complex $X$ is called a {\em stacked $d$-ball} if there exists a
sequence $B_1, \dots, B_m$ of pure simplicial complexes such that
$B_1$ is a standard $d$-ball, $B_m=X$ and, for $2\leq i\leq m$,
$B_i = B_{i-1}\cup \overline{\sigma_i}$ and $B_{i-1} \cap
\overline{\sigma_i} = \overline{\tau_i}$, where $\sigma_i$ is a
$d$-face and $\tau_i$ is a $(d-1)$-face of $\sigma_i$. Clearly, a
stacked ball is a pseudomanifold. A simplicial complex is called a
{\em stacked $d$-sphere} if it is the boundary of a stacked
$(d+1)$-ball.
A trivial induction on $m$ shows that a stacked $d$-ball actually
triangulates a topological $d$-ball, and hence a stacked
$d$-sphere triangulates a topological $d$-sphere. If $X$ is a
stacked ball then clearly $\Lambda(X)$ is a tree. 

\begin{prop}[Datta and Singh \cite{BDNS}]\label{prop:bdns}
Let $X$ be a pure $d$-dimensional simplicial complex.
\begin{enumerate}[{\rm (a)}]
\item If $\Lambda(X)$ is a tree then $f_0(X) \leq f_d(X) +d$.
\item $\Lambda(X)$ is a tree and $f_0(X) = f_d(X) +d$ if and only
if $X$ is a stacked ball.
\end{enumerate}
\end{prop}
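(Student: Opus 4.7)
The plan is to order the $f_d(X)$ facets by a rooted traversal of the tree $\Lambda(X)$ so that every non-root facet has a unique parent among the earlier facets, and then track how many new vertices each facet contributes. This single ordering will drive both parts of the proof.

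For (a), I would fix a root of $\Lambda(X)$ and list the facets as $\sigma_1,\ldots,\sigma_{f_d(X)}$ in an order such that for every $i\geq 2$ the parent $\sigma_{p(i)}$ in the tree satisfies $p(i)<i$. Since adjacency in $\Lambda(X)$ means the intersection is a $(d-1)$-face, $|\sigma_i\cap\sigma_{p(i)}|=d$, so $\sigma_i$ adds at most one new vertex beyond those in $V_{i-1}:=\sigma_1\cup\cdots\cup\sigma_{i-1}$. Iterating $|V_i|=|V_{i-1}|+|\sigma_i\setminus V_{i-1}|$ starting from $|V_1|=d+1$ yields $f_0(X)\leq f_d(X)+d$.

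For the ``if'' half of (b), a direct induction on the defining sequence of a stacked ball works: gluing $\overline{\sigma_i}$ to $B_{i-1}$ along a single $(d-1)$-face adds one vertex and one facet (preserving the equality) and adds a single leaf to $\Lambda$ (preserving the tree property). For the ``only if'' half I would use the same rooted ordering. Equality in (a) forces $|\sigma_i\setminus V_{i-1}|=1$ for every $i\geq 2$, so $\tau_i:=\sigma_i\cap V_{i-1}$ is a $d$-element subset of $\sigma_i$. Since $\sigma_i\cap\sigma_{p(i)}\subseteq\tau_i$ and both sides have cardinality $d$, I get $\tau_i=\sigma_i\cap\sigma_{p(i)}$, which is genuinely a $(d-1)$-face of $X$. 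Writing $B_i$ for the subcomplex generated by $\sigma_1,\ldots,\sigma_i$, I would then verify $B_{i-1}\cap\overline{\sigma_i}=\overline{\tau_i}$: the inclusion $\supseteq$ is immediate because $\tau_i\subseteq\sigma_{p(i)}\subseteq B_{i-1}$, and for $\subseteq$ any face $\alpha$ of $\sigma_i$ lying in $B_{i-1}$ must sit inside some $\sigma_i\cap\sigma_k$ with $k<i$, hence inside $\sigma_i\cap V_{i-1}=\tau_i$. This realizes $X$ as a stacked $d$-ball built by the sequence $B_1,\ldots,B_{f_d(X)}$.

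The main obstacle I anticipate is the $\subseteq$ inclusion in the last step: without the equality $f_0(X)=f_d(X)+d$ there is no reason why $\sigma_i$ should meet $B_{i-1}$ in just a single $(d-1)$-face, and one could imagine $\sigma_i$ sharing vertices with $B_{i-1}$ along several distinct proper faces. The tight equality case of (a) is precisely what collapses $\sigma_i\cap V_{i-1}$ to a single $d$-element set and rules out such pathologies, which is why the implication requires both the tree hypothesis and the sharp face-vector condition rather than either alone.
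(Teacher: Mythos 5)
This proposition is quoted in the paper from \cite{BDNS} without proof, so there is no in-paper argument to compare with; judged on its own terms, your plan is the standard one (a leaf-by-leaf ordering of the tree, equivalently your rooted traversal), and part (a) together with the ``only if'' half of (b) are correct and complete. In (a), adjacency in $\Lambda(X)$ gives $|\sigma_i\cap\sigma_{p(i)}|=d$, so each facet after the first contributes at most one new vertex (and purity guarantees every vertex of $X$ lies in some facet, which your count implicitly uses). In the ``only if'' half, equality forces $|\sigma_i\setminus V_{i-1}|=1$, your identification $\tau_i=\sigma_i\cap V_{i-1}=\sigma_i\cap\sigma_{p(i)}$ is right, and your verification of $B_{i-1}\cap\overline{\sigma_i}=\overline{\tau_i}$ is exactly what is needed; note also that your construction automatically places $\tau_i$ in a \emph{unique} earlier facet, since two earlier facets containing $\tau_i$ would form a triangle with $\sigma_i$ in $\Lambda(X)$, contradicting the tree hypothesis.

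The one genuine gap is in the ``if'' half of (b), at the sentence ``adds a single leaf to $\Lambda$.'' From $B_{i-1}\cap\overline{\sigma_i}=\overline{\tau_i}$ you can deduce that an old facet $\sigma_k$ is adjacent to $\sigma_i$ in $\Lambda(B_i)$ if and only if $\tau_i\subseteq\sigma_k$ (indeed $\sigma_i\cap\sigma_k$ is a face of $\overline{\tau_i}$, so a codimension-one intersection forces $\sigma_i\cap\sigma_k=\tau_i$), but nothing in the definition as printed forces $\tau_i$ to lie in only \emph{one} facet of $B_{i-1}$. For $d=2$, the three triangles $abc,\,abd,\,abe$ arise from a sequence satisfying the stated conditions with $\tau_2=\tau_3=ab$, yet $f_0=f_2+2$ while the dual graph is a triangle, not a tree (and the complex is not even a weak pseudomanifold, so it also contradicts the paper's remark that stacked balls are pseudomanifolds). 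So, read literally, the ``if'' implication fails and your induction breaks precisely at the leaf claim; the intended convention is that $\tau_i$ is a boundary $(d-1)$-face of $B_{i-1}$, i.e., lies in a unique facet of $B_{i-1}$. With that convention your adjacency description shows $\sigma_i$ attaches to exactly one old facet (at least one exists because $\tau_i$ is a face of the pure complex $B_{i-1}$), the new vertex and new facet counts are as you say, and the induction goes through. You should state this uniqueness hypothesis and the one-line adjacency argument explicitly rather than asserting the leaf property.
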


In \cite{Walkup}, Walkup defined a class $\Kd$ of triangulated
$d$-manifolds where all vertex links are stacked $(d-1)$-spheres.
Analogously, let us define the class $\lKd$ of triangulated
$d$-manifolds all whose vertex links are stacked $(d-1)$-balls.
Clearly the class $\lKd$ consists of manifolds with boundary. In
\cite{bd16}, Bagchi and Datta have shown that for $d\geq 4$, the
members of $\Kd$ can be obtained as boundary of members of
$\overline{\mathcal K}(d+1)$. For a simplicial complex $X$, let
$V(X)$ denote its vertex set. For a set $S$, let $\binom {S}{\leq k}$
denote the collection of subsets of $S$ of cardinality at most $k$.
From the results in \cite{bd16}, we have\,:

\begin{prop}[Bagchi and Datta]\label{prop:bagchidatta} Let
$d\geq 4$ and $M\in{\mathcal{K}}(d)$. Then $\overline{M}$ defined by
\begin{equation}\label{eq:bagchidatta}
\overline{M} :=
\left\{\alpha\subseteq V(M) : \binom{\alpha}{\leq 3}\subseteq M \right\}
\end{equation}
is the unique member of $\overline{\mathcal K}(d+1)$ such that
$\partial \overline{M}=M$.  
\end{prop}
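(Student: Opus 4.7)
The strategy is to reduce both the existence and uniqueness claims to a single rigidity principle for stacked balls in high dimension. I would first establish the following lemma: for $d \geq 3$, every stacked $d$-ball $B$ satisfies
\[
B = \bigl\{\alpha \subseteq V(B) : \binom{\alpha}{\leq 3} \subseteq B\bigr\}.
\]
The proof is by induction on the number of facets of $B$. The base case (a single $d$-simplex) is immediate. For the inductive step, remove a leaf $\sigma$ from the dual tree of $B$ and write $B = B' \cup \overline{\sigma}$ with $B' \cap \overline{\sigma} = \overline{\tau}$, where $\tau$ is a $(d-1)$-face of $\sigma$. Let $w$ denote the unique vertex of $\sigma \setminus \tau$; because $\sigma$ is a leaf and $B$ is built by attaching facets along $(d-1)$-faces, a short check shows $\sigma$ is the only facet of $B$ containing $w$. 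Given $\alpha \subseteq V(B)$ with $\binom{\alpha}{\leq 3} \subseteq B$, split on whether $w \in \alpha$: if $w \notin \alpha$, then $\alpha \subseteq V(B')$ and the induction hypothesis gives $\alpha \in B'$; if $w \in \alpha$, then every other $u \in \alpha$ has $\{u,w\} \in B$, and the uniqueness of $\sigma$ as a facet through $w$ forces $u \in \sigma$, so $\alpha \subseteq \sigma$ and $\alpha \in \overline{\sigma} \subseteq B$.

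For existence, the definition of $\overline{M}$ makes it manifestly downward closed, with vertices, edges, and triangles identical to those of $M$. For each $v \in V(M)$, the link $\lk{M}{v}$ is a stacked $(d-1)$-sphere (since $M \in \Kd$), and the rigidity lemma, applied to the unique stacked $d$-ball bounded by this sphere, identifies that ball with $\widetilde{L}_v := \{\beta \subseteq V(\lk{M}{v}) : \binom{\beta}{\leq 3} \subseteq \lk{M}{v}\}$. A short bookkeeping step unpacking the defining condition of $\overline{M}$ shows $\lk{\overline{M}}{v} = \widetilde{L}_v$, so every vertex link of $\overline{M}$ is a stacked $d$-ball and $\overline{M} \in \overline{\mathcal{K}}(d+1)$. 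The identification $\partial \overline{M} = M$ then follows by matching boundaries of the vertex links: $\lk{\partial \overline{M}}{v} = \partial \lk{\overline{M}}{v} = \lk{M}{v}$ for each $v$.

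For uniqueness, let $N \in \overline{\mathcal{K}}(d+1)$ with $\partial N = M$. Each link $\lk{N}{v}$ is a stacked $d$-ball with $d \geq 4$; combined with the general fact that all vertices of a stacked ball lie in its boundary, this gives $V(N) = V(M)$, and iterating the rigidity lemma one dimension down (applied inside each vertex link) shows that the $2$-skeleton of $N$ coincides with that of $M$. Using the rigidity lemma again link-by-link, $N$ is entirely determined by this common $2$-skeleton, and thus $N = \overline{M}$. The main obstacle in the whole plan is the inductive step of the rigidity lemma together with the bookkeeping identification $\lk{\overline{M}}{v} = \widetilde{L}_v$; once these two technical points are in hand, both existence and uniqueness fall out by routine manipulation of links.
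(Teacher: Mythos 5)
The paper itself does not prove this proposition --- it is imported from Bagchi and Datta \cite{bd16} --- so your argument has to stand on its own; in spirit it follows the same skeleton-reconstruction idea as the original. Your rigidity lemma is correct, and the induction you sketch does work: the ``short check'' that the leaf facet $\sigma$ has a private vertex $w$ follows from Proposition \ref{prop:bdns}, since deleting $\sigma$ leaves a complex whose dual graph is a tree on $f_d-1$ facets, hence with at most $f_d-1+d$ vertices, which forces exactly one vertex of $\sigma$ to lie in no other facet and forces the remaining complex to be a stacked ball.

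The genuine gap is in the two steps you call bookkeeping. With the lemma in its $\binom{\alpha}{\leq 3}$ form, the equality $\lk{\overline{M}}{v}=\widetilde L_v$ does not follow by unpacking definitions: if $\beta\cup\{v\}\in\overline{M}$, you know every pair $\{a,b\}\subseteq\beta$ satisfies $\{a,b,v\}\in M$, but a triple $\{a,b,c\}\subseteq\beta$ lies in $\lk{M}{v}$ only if $\{a,b,c,v\}$ is a $3$-face of $M$, and the defining condition of $\overline{M}$ (subsets of size at most $3$) says nothing about $4$-element subsets. So only $\widetilde L_v\subseteq\lk{\overline{M}}{v}$ is formal; the reverse inclusion is not, and the same mismatch recurs in your uniqueness step, where you would need $\binom{\beta}{\leq 3}\subseteq\lk{N}{v}$ but only control triangles of $M$, not $3$-faces of $N$. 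The repair lies inside your own proof: your induction never uses triples, so it actually proves the stronger statement $B=\{\alpha\subseteq V(B):\binom{\alpha}{\leq 2}\subseteq B\}$ for every stacked ball $B$. Combine this with the standard fact (provable by the same leaf induction, and needed anyway for your claims $V(N)=V(M)$ and $\skel{2}{N}=\skel{2}{M}$, which do not follow from the rigidity lemma itself) that every face of dimension at most $d-2$ of a stacked $d$-ball lies in its boundary. Then for $d\geq 4$ one gets the sandwich $\widetilde L_v\subseteq\lk{\overline{M}}{v}\subseteq\{\beta:\binom{\beta}{\leq 2}\subseteq\lk{M}{v}\}=B_v=\widetilde L_v$, where $B_v$ is a stacked ball bounded by $\lk{M}{v}$, and the uniqueness argument closes in the same way. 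State and prove the $\leq 2$ version and the skeleton-in-boundary fact explicitly; as written, the ``short bookkeeping'' steps would not go through.
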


In the above construction, notice that $V(\overline{M})=V(M)$ and
$\overline{M}$ is neighborly if and only if $M$ is neighborly.

Let $\sigma_1,\sigma_2$ be two facets of a pure simplicial complex
$X$. Let $\psi: \sigma_1\rightarrow \sigma_2$ be a bijection such that
$x$ and $\psi(x)$ have no common neighbor in the edge graph ($1$-skeleton)
of $X$ for each $x\in \sigma_1$. Let $X^\psi$ denote the complex
obtained by identifying $x$ with $\psi(x)$ in $X\backslash
\{\sigma_1,\sigma_2\}$. Then $X^\psi$ is said to be obtained from $X$
by a {\em combinatorial handle addition}. We know the following:

\begin{prop}[Kalai \cite{ka}]\label{prop:kalai}
For $d\geq 4$, a connected simplicial complex $X$ is in ${\cal
K}(d)$ if and only if $X$ is obtained from a stacked $d$-sphere by
$\beta_1(X)$ combinatorial handle additions. In consequence, any
such $X$ triangulates either $(S^{\,d -1}\!\times S^1)^{\#
\beta_1}$  or $(\TPSSD)^{\#\beta_1}$ depending on whether $X$ is
orientable or not. $($Here $\beta_1 = \beta_1(X) =
\beta_1(X;\ZZ_2).)$
\end{prop}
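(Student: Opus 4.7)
I would prove the biconditional in two directions; the topological conclusion reads off from the construction. For the \emph{if} direction, induct on the number of combinatorial handle additions. A stacked $d$-sphere lies in $\Kd$ because each vertex link is visibly assembled as a stacked $(d-1)$-sphere during the iterative stacked construction. For the inductive step, suppose $X\in\Kd$ and $X^\psi$ arises from $X$ by identifying facets $\sigma_1,\sigma_2$ via $\psi$. For each $v\in\sigma_1$ with image $w=\psi(v)$ collapsed to $\bar v$, the link $\lk{X^\psi}{\bar v}$ is exactly the connected sum of $\lk{X}{v}$ and $\lk{X}{w}$ along the facets $\sigma_1\setminus\{v\}$ and $\sigma_2\setminus\{w\}$; the no-common-neighbor hypothesis precludes any extra identifications. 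Connected sums of stacked $(d-1)$-spheres are stacked (the union of the two defining stacked $d$-balls, glued along the corresponding boundary facets, is itself stacked, and its boundary is precisely the connected sum), so $\lk{X^\psi}{\bar v}$ is a stacked $(d-1)$-sphere. Links at vertices outside $\sigma_1\cup\sigma_2$ are unchanged, so $X^\psi\in\Kd$; topologically a combinatorial handle addition is a non-separating $1$-handle attachment, hence $\beta_1$ increases by exactly one.

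For the \emph{only if} direction, I would induct on $\beta_1(X)$. For the base case $\beta_1(X)=0$, apply Proposition \ref{prop:bagchidatta} to produce $\overline{X}\in\overline{\mathcal K}(d+1)$ with $\partial\overline{X}=X$. Summing Proposition \ref{prop:bdns}(a) over the vertex links of $\overline{X}$ (each a stacked $d$-ball, contributing the tree-type face-number equality) yields $f_0(\overline{X})=f_{d+1}(\overline{X})+(d+1)$. Simultaneously, $\beta_1(X)=0$ forces $\Lambda(\overline{X})$ to be a tree, since any cycle in the dual graph would produce a non-bounding $1$-cycle in $X$. Proposition \ref{prop:bdns}(b) then identifies $\overline{X}$ as a stacked $(d+1)$-ball, so $X=\partial\overline{X}$ is a stacked $d$-sphere. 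For the inductive step with $\beta_1(X)\ge 1$, one locates facets $\sigma_1,\sigma_2$ of $X$ and a bijection $\psi$ realizing $X$ as $X'^\psi$ for some $X'\in\Kd$ with $\beta_1(X')=\beta_1(X)-1$; the inductive hypothesis applied to $X'$ then closes the argument. The topological classification now follows by induction, since each handle addition is the simplicial realization of a connected sum with $S^{d-1}\!\times S^1$ (when $\psi$ is orientation-reversing on $\sigma_1,\sigma_2$) or with $\TPSSD$ (otherwise), starting from the stacked $d$-sphere $S^d$.

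The main obstacle is the handle-extraction step in the only-if inductive case: turning the purely numerical information $\beta_1(X)\ge 1$ into a concrete pair of facets together with a valid identifying bijection. Producing $\sigma_1,\sigma_2$ and $\psi$ satisfying the no-common-neighbor condition, and verifying that undoing the handle yields a bona fide complex in $\Kd$ with $\beta_1$ decreased by exactly one, requires delicate control over how cycles in $\Lambda(\overline{X})$ interact with the stacked-ball structure of each vertex link. Kalai's original argument navigates this via rigidity of bar-and-joint frameworks and the lower bound theorem; a more combinatorial route via $\overline{X}$ would extract the handle from a non-tree portion of $\Lambda(\overline{X})$ and use the correspondence of Proposition \ref{prop:bagchidatta} to transport it back to $X$.
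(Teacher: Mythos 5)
The paper offers no proof of this proposition at all: it is quoted verbatim from Kalai's 1987 paper on rigidity and the lower bound theorem, so there is nothing in-paper to compare your argument against, and your proposal must stand on its own. It does not. The ``if'' direction is essentially fine (stacked spheres lie in $\Kd$, links of identified vertices become connected sums of stacked spheres, and connected sums of stacked spheres are stacked), but the ``only if'' direction --- which is the actual content of Kalai's theorem --- has two genuine gaps. First, in the base case $\beta_1(X)=0$ your counting step is wrong as stated: summing Proposition \ref{prop:bdns}(a) over the vertex links of $\overline{X}$ gives $(d+2)f_{d+1}(\overline{X})=\sum_{v}\bigl(f_0(\lk{\overline{X}}{v})-d\bigr)$, and this collapses to a clean relation like $f_0(\overline{X})=f_{d+1}(\overline{X})+(d+1)$ only when $\overline{X}$ is neighborly (this is exactly what Lemmas \ref{lem:numfacets} and \ref{lem:dgone} of the paper need neighborliness for); similarly, your claim that a cycle in $\Lambda(\overline{X})$ forces a non-bounding $1$-cycle in $X$ is asserted, not proved, and the paper's own route to such a statement (Lemma \ref{lem:beta1}) passes through Proposition \ref{prop:lss}, which is itself deduced \emph{from} Proposition \ref{prop:kalai} --- so leaning on that machinery here would be circular. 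The same caution applies to your use of Proposition \ref{prop:bagchidatta}: it is a much later result whose development sits on top of the Walkup--Kalai theory, so importing it into a proof of Kalai's theorem needs an independent justification.

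Second, and decisively, the inductive step of the ``only if'' direction --- producing, from $\beta_1(X)\geq 1$ alone, two facets $\sigma_1,\sigma_2$ and an admissible bijection $\psi$ (no common neighbors) with $X=(X')^{\psi}$ for some $X'\in\Kd$ with $\beta_1(X')=\beta_1(X)-1$ --- is precisely the heart of the theorem, and you explicitly leave it as an ``obstacle'' rather than proving it. Kalai's argument gets this from generic rigidity of graphs of triangulated manifolds together with the lower bound theorem; no step of your sketch substitutes for that input. As it stands, your proposal proves the easy implication and the topological consequence conditional on the decomposition, but not the characterization itself.
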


In the above, $S^{\,d-1}\!\times S^1$ denotes the (orientable) sphere
product with a circle, while $\TPSSD$ denotes the (non-orientable)
twisted sphere product with a circle. As usual $X^{\#k}$ denotes the
connected sum of $k$ copies of the manifold $X$. 
From Proposition \ref{prop:kalai} and Theorem $4$ in \cite{LSS}, we
infer the following\,:

\begin{prop}\label{prop:lss} Let $X$ be a connected triangulated
$d$-manifold. Then $X$ satisfies
\begin{equation}\label{eq:tn}
\binom{f_0-d-1}{2} \geq \binom{d+2}{2}\beta_1.
\end{equation}
Moreover for $d\geq 4$, the equality holds if and only if $X$ is a
neighborly member of ${\cal K}(d)$.
\end{prop}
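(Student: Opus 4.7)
The plan is to deduce the statement by combining two results already cited in the paper: Theorem~4 of \cite{LSS}, which provides a Novik--Swartz-type lower bound on the parameter $g_2$, together with Kalai's characterization (Proposition~\ref{prop:kalai}) of the class $\Kd$. The key algebraic observation driving the argument is that $\binom{f_0-d-1}{2}$ is precisely the value of $g_2$ in the $2$-neighborly case, so both parts of the proposition will reduce to chaining a trivial upper bound on $g_2$ with the Novik--Swartz lower bound.

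First I would recall that $g_2(X) := f_1(X) - (d+1)f_0(X) + \binom{d+2}{2}$. Since $X$ has at most $\binom{f_0}{2}$ edges, a direct computation gives
\[
g_2(X) \;\leq\; \binom{f_0}{2} - (d+1)f_0 + \binom{d+2}{2} \;=\; \binom{f_0-d-1}{2},
\]
with equality if and only if $X$ is $2$-neighborly. Next, by Theorem~4 of \cite{LSS} applied to the connected triangulated $d$-manifold $X$, one has
\[
g_2(X) \;\geq\; \binom{d+2}{2}\,\beta_1(X),
\]
and, for $d\geq 4$, equality holds precisely when $X \in \Kd$. Chaining the two displays immediately yields the inequality~\eqref{eq:tn}. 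For $d\geq 4$, equality throughout forces $X$ to be simultaneously $2$-neighborly and a member of $\Kd$, which is the desired characterization; Proposition~\ref{prop:kalai} then identifies the possible topological types.

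There is essentially no genuine obstacle here: the proposition is a straightforward synthesis of the two cited results with an elementary edge count. The only points requiring care are (i) to track the equality case through both inequalities simultaneously, and (ii) to note that the restriction to $d\geq 4$ in the second assertion comes entirely from the equality hypothesis in the Novik--Swartz bound, whereas the inequality itself holds for all $d\geq 3$.
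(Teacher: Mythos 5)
Correct, and essentially the paper's own route: the paper gives no independent argument for this proposition, simply inferring it from Theorem 4 of \cite{LSS} together with Proposition \ref{prop:kalai}, and your $g_2$-bookkeeping (the edge count $f_1\leq\binom{f_0}{2}$ giving $g_2\leq\binom{f_0-d-1}{2}$ with equality iff $X$ is $2$-neighborly, chained with the Novik--Swartz-type bound $g_2\geq\binom{d+2}{2}\beta_1$ and its equality case) is exactly the standard derivation behind that citation. One small attribution point: the ``if'' half of the equality statement (that a neighborly member of $\Kd$ attains equality) is where Proposition \ref{prop:kalai} actually enters --- each combinatorial handle addition increases $g_2$ by $\binom{d+2}{2}$ and $\beta_1$ by one, so members of $\Kd$ satisfy $g_2=\binom{d+2}{2}\beta_1$ --- rather than merely identifying the topological types.
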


For $d\geq 3$, a triangulated $d$-manifold is called {\em tight
neighborly} if it satisfies (\ref{eq:tn}) with equality.

For a field $\mathbb{F}$, a $d$-dimensional simplicial complex $X$
is called {\em tight with respect to $\mathbb{F}$} (or {\em
$\mathbb{F}$-tight}) if {\rm (i)} $X$ is connected, and {\rm (ii)}
for all induced sub-complexes $Y$ of $X$ and for all $0\leq j\leq
d$, the morphism $H_j(Y;\mathbb{F})\rightarrow H_j(X;\mathbb{F})$
induced by the inclusion map $Y\hookrightarrow X$ is injective. 

A $d$-dimensional simplicial complex $X$ is called {\em strongly minimal} if
$f_i(X)\leq f_i(Y)$, $0\leq i\leq d$, for every triangulation $Y$ of the
geometric carrier $|X|$ of $X$.

Effenberger \cite{Effenberger} proved that for $d\geq 4$, tight
neighborly triangulated $d$-manifolds are $\ZZ_2$-tight. Bagchi and Datta
\cite{BDTIGHT} proved that for $d\geq 4$, $\FF$-tight members of
Walkup's class $\Kd$ are strongly minimal. Thus for $d\geq 4$,
tight neighborly triangulated $d$-manifolds are strongly minimal.

\section{Non-existence of tight neighborly manifolds with
$\beta_1=2$}\label{sec:mainresult}

The following is the main result of this article.

\begin{theo}\label{thm:inequality} For $d\geq 4$, there does not exist
a tight neighborly triangulated $d$-manifold $M$ with $\beta_1(M)=2$.
\end{theo}
\begin{proof}[{\bf Proof-Sketch:}]
Suppose there exists a tight
neighborly $d$-manifold $M$ with $\beta_1(M)=2$. Then by
Proposition \ref{prop:lss}, $M$ is a neighborly member of $\Kd$. By Proposition
\ref{prop:bagchidatta}, there exists a neighborly member
$\overline{M}$ of $\overline{\cal
K}(d+1)$ such that $\partial \overline{M}=M$ and
$V(\overline{M})=V(M)$. The proof rests on the following observation
which follows from Corollary \ref{cor:degthreemore}:
\smallskip
\par\noindent{\it Let $T$ be the set of facets of $\overline{M}$ with
degree three or more in $\Lambda(\overline{M})$. Then the facets in
$T$ together contain all the vertices of $\overline{M}$.} 
\smallskip

\noindent Since $\overline{M}$ is $(d+1)$-dimensional, we have the obvious
inequality $f_0(M) = f_0(\overline{M})\leq |T|(d+2)$. Since $M$ is tight
neighborly, we have $(f_0(M)-d-1)(f_0(M)-d-2)=\beta_1(M)(d+1)(d+2)$.
For the case $\beta_1(M)=2$, we shall see that the inequality and the
equation cannot be simultaneously satisfied, thus proving the theorem.
\end{proof}

The following are used in the proof of Theorem \ref{thm:inequality}. In the
results below we shall assume that $M\in \overline{\cal K}(d)$ is not the
standard $d$-ball $B^{d}_{d+1}$.

\begin{lemma}\label{lem:twoconnected} Let $M\in\lKd$. If $M$ is
neighborly, then the dual graph of $M$ is $2$-connected.
\end{lemma}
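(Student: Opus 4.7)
The plan is to establish both parts of the definition of $2$-connectedness: that $|V(\Lambda(M))|\ge 3$ and that $\Lambda(M)$ has no cut vertex. The first is a short counting check. If $\Lambda(M)$ had only two vertices, then $M$ would consist of two facets sharing a codimension-one face, so $M$ would be the stacked ball $B^d_{d+2}$ on $d+2$ vertices. A quick inclusion-exclusion count gives $f_1(B^d_{d+2}) = 2\binom{d+1}{2}-\binom{d}{2} = \binom{d+2}{2}-1$, so this ball is not neighborly; together with the standing hypothesis $M\ne B^d_{d+1}$, this forces $M$ to have at least three facets.

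For the main statement, I will argue by contradiction: suppose some facet $\sigma$ is a cut vertex, so that the remaining facets split into two nonempty sets $A$ and $B$ with no codimension-one adjacencies between them in $\Lambda(M)\setminus\{\sigma\}$. Set $V_A := \bigcup_{\tau\in A}\tau$ and $V_B := \bigcup_{\tau\in B}\tau$. Because $\Lambda(M)$ is connected, at least one facet in $A$ is adjacent to $\sigma$; any such facet shares all but one vertex with $\sigma$, so both $V_A\setminus\sigma$ and $V_B\setminus\sigma$ are nonempty.

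The crucial claim is $V_A\cap V_B\subseteq\sigma$. For its proof I use that every vertex link $\lk{M}{v}$ is a stacked $(d-1)$-ball, hence by Proposition~\ref{prop:bdns}(b) its dual graph is a tree; since adjacency in $\Lambda(\lk{M}{v})$ corresponds to codimension-one adjacency in $\Lambda(M)$ through facets containing $v$, any two facets of $M$ containing $v$ can be joined by a path in $\Lambda(M)$ that stays inside the star of $v$. If some $v\in V_A\cap V_B$ lay outside $\sigma$, then such a path between a facet of $A$ through $v$ and a facet of $B$ through $v$ would avoid $\sigma$ (since $v\notin\sigma$), contradicting that $\sigma$ separates $A$ from $B$ in $\Lambda(M)$. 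With this claim in hand, neighborliness closes the argument: choose $v\in V_A\setminus\sigma$ and $w\in V_B\setminus\sigma$; the edge $\{v,w\}$ lies in some facet $\tau$, and each of the three possibilities $\tau=\sigma$, $\tau\in A$, $\tau\in B$ immediately forces either $v$ or $w$ into $(V_A\cap V_B)\setminus\sigma$, a contradiction.

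The step I expect to be the most delicate is the claim $V_A\cap V_B\subseteq\sigma$, but once the correspondence between $\Lambda(\lk{M}{v})$ and the subgraph of $\Lambda(M)$ induced by facets through $v$ is in hand, it is essentially automatic; everything else is routine bookkeeping powered by neighborliness.
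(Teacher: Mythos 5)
Your proof is correct and follows essentially the same route as the paper: both arguments hinge on the fact that the facets through a fixed vertex induce a connected (tree) subgraph of $\Lambda(M)$ because the vertex links are stacked balls, and then use neighborliness to rule out a cut vertex (the paper picks $x_1,x_2$ outside $\sigma$ in the two components and shows their facet-sets are disjoint, which is just the contrapositive packaging of your claim $V_A\cap V_B\subseteq\sigma$). Your explicit check that $\Lambda(M)$ has at least three vertices is a welcome extra detail that the paper leaves implicit.
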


\begin{proof}
Let $\Lambda$ denote the dual graph of $M$. Suppose $\Lambda$ is not
$2$-connected. Then there exists $\sigma\in V(\Lambda)$ such that
$\Lambda - \sigma$ is disconnected. Let $C_1, C_2$ be different
components of $\Lambda - \sigma$. Choose $\sigma_1\in C_1$ and
$\sigma_2 \in C_2$. Now choose $x_1\in  \sigma_1$ and $x_2\in
\sigma_2$ such that $x_1,x_2\not\in \sigma$. Note that $\sigma,
\sigma_1$ and $\sigma_2$ are facets of $M$, while $x_1$ and $x_2$ are
vertices of $M$.  Let $V_1$ denote the
set of vertices of $\Lambda$ that contain $x_1$ and $V_2$ denote
the set of vertices of $\Lambda$ containing $x_2$. Observe that
the subgraph of $\Lambda$ induced by $V_1$ is precisely the dual
graph of $\st{M}{x_1}$, i.e., $\Lambda(\lk{M}{x_1})\cong
\Lambda(\st{M}{x_1}) = \Lambda[V_1]$. Similarly
$\Lambda(\lk{M}{x_2})\cong \Lambda[V_2]$. Since $M\in \lKd$, the
vertex-links are stacked balls, and hence, by Proposition
\ref{prop:bdns}, their dual graphs are
trees. Thus $V_1$ and $V_2$ induce trees on $\Lambda$.  Since
$x_1,x_2\not\in \sigma$ we conclude that $V_1$ and $V_2$ induce
trees on $\Lambda - \sigma$. Now since $\sigma_1\in V_1$ and
$\sigma_2\in V_2$ we conclude that $\Lambda[V_1]\subseteq C_1$ and
$\Lambda[V_2]\subseteq C_2$.  Therefore $V_1\cap V_2=\emptyset$.
Therefore $x_1x_2$ is not a simplex in $M$, contradicting the
neighborliness of $M$. This proves the lemma.
\end{proof}

\begin{lemma}\label{lem:numfacets} Let $M\in \overline{\mathcal
K}(d)$ be neighborly. Then each $x\in V(M)$ is contained in
$f_0(M)-d$ facets of $M$, which induce a tree on $\Lambda(M)$.
\end{lemma}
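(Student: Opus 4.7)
The plan is to analyze the star of $x$ in $M$ and transfer the stacked-ball structure of $\text{lk}_M(x)$ to the desired count and tree property, using neighborliness to pin down the vertex count of the link.

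First I would observe that the facets of $M$ containing $x$ are exactly the facets of $\st{M}{x} = x \ast \lk{M}{x}$, hence in bijection with the facets of $\lk{M}{x}$. Since $M \in \lKd$, the link $L := \lk{M}{x}$ is a stacked $(d-1)$-ball. Neighborliness of $M$ ensures that every other vertex of $M$ is joined to $x$ by an edge, so $f_0(L) = f_0(M) - 1$. Applying Proposition \ref{prop:bdns}(b) to the stacked $(d-1)$-ball $L$ gives
\[
f_{d-1}(L) = f_0(L) - (d-1) = f_0(M) - d,
\]
which is precisely the number of facets of $M$ through $x$.

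Next I would show that the subgraph of $\Lambda(M)$ induced by the facets through $x$ is naturally isomorphic to $\Lambda(L)$. Any two such facets have the form $F_i = \{x\} \cup \tau_i$ with $\tau_i$ a facet of $L$; they share a codimension-one face in $M$ (a $d$-element set containing $x$) if and only if $|\tau_1 \cap \tau_2| = d-1$, i.e., if and only if $\tau_1$ and $\tau_2$ are adjacent in $\Lambda(L)$. Hence the induced subgraph of $\Lambda(M)$ on the facets containing $x$ coincides with $\Lambda(\st{M}{x}) \cong \Lambda(L)$. By Proposition \ref{prop:bdns}(b) applied again to the stacked ball $L$, $\Lambda(L)$ is a tree, which completes the proof.

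There is no real obstacle here: the argument is a direct combination of the definition of a star, the neighborliness hypothesis, and the two conclusions of Proposition \ref{prop:bdns} for the stacked link. The only point that deserves explicit verification is that the dual-graph adjacency is not enlarged when one passes from $\st{M}{x}$ to $M$ (i.e., that no extra codimension-one intersection between two facets through $x$ can appear in $M$), and this is immediate from the combinatorics of $(d+1)$-element sets sharing $d$ elements.
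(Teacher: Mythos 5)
Your proof is correct and follows essentially the same route as the paper: pass to the link $L=\lk{M}{x}$, use neighborliness to get $f_0(L)=f_0(M)-1$, apply Proposition \ref{prop:bdns} to the stacked ball $L$ for both the facet count and the tree property, and identify $\Lambda(L)$ with the subgraph of $\Lambda(M)$ induced by the facets through $x$. The extra verification you give of that last identification is a detail the paper simply asserts.
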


\begin{proof}
Let $L =\lk{M}{x}$ denote the link of $x$ in $M$. Then $L$ is a
stacked $(d-1)$-ball. Since $M$ is neighborly, we have
$f_0(L)=f_0(M)-1$. Then from Proposition \ref{prop:bdns}, we see that
$f_{d-1}(L)=f_0(M)-d$ and that $\Lambda(L)$ is a tree. Finally we
observe that $\Lambda(L)\cong \Lambda(M)[V_x]$, where $V_x$ is the
set of facets of $\Lambda(M)$ containing $x$. This completes the
proof.
\end{proof}

\begin{lemma}\label{lem:dgone} Let $M\in \overline{\mathcal
K}(d)$ be neighborly. Then $\Lambda(M)$ has $n(n-d)/(d+1)$
vertices and $n(n-d-1)/d$ edges, where $n=f_0(M)$.
\end{lemma}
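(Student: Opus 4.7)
My plan is to deduce both counts from Lemma~\ref{lem:numfacets} by two straightforward double counts, with no need for any structural argument beyond what that lemma already provides.

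For the vertex count of $\Lambda(M)$, which equals $f_d(M)$, I will count the incidence pairs $(x,\sigma)$ with $x \in V(M)$, $\sigma$ a facet of $M$, and $x \in \sigma$. Since each facet is a $d$-simplex with $d+1$ vertices, these pairs number $(d+1) f_d(M)$ on one side; on the other side, Lemma~\ref{lem:numfacets} tells us that each of the $n$ vertices of $M$ lies in exactly $n-d$ facets, so the total is $n(n-d)$. Equating the two expressions gives $f_d(M) = n(n-d)/(d+1)$, the asserted number of vertices of $\Lambda(M)$.

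For the edge count, I will count pairs $(x,e)$ with $x \in V(M)$ and $e$ an edge of $\Lambda(M)$ such that both endpoints of $e$, viewed as facets of $M$, contain $x$. If $e$ corresponds to the shared $(d-1)$-face $\tau = \sigma_1 \cap \sigma_2$, then the vertices $x$ contributing this edge are exactly the $d$ vertices of $\tau$, so the total on the edge side is $d \cdot |E(\Lambda(M))|$. On the vertex side, fixing $x$ and letting $V_x$ denote the set of facets containing $x$, the contributing edges are exactly the edges of the induced subgraph $\Lambda(M)[V_x]$; Lemma~\ref{lem:numfacets} identifies this with $\Lambda(\lk{M}{x})$, which is a tree on $n-d$ vertices and hence has $n-d-1$ edges. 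Summing over the $n$ vertices of $M$ yields $n(n-d-1)$ pairs, whence $|E(\Lambda(M))| = n(n-d-1)/d$.

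There is essentially no obstacle to overcome here: once Lemma~\ref{lem:numfacets} is in hand, both identities drop out of elementary double counting. The only point worth checking carefully in the second count is that an edge of $\Lambda(M)$ has both endpoint-facets containing $x$ if and only if it is an edge of $\Lambda(M)[V_x]$, which is immediate from the definition of the induced subgraph; the identification $\Lambda(\lk{M}{x}) \cong \Lambda(M)[V_x]$ then licenses the use of the tree structure to count $n-d-1$ such edges at each vertex $x$.
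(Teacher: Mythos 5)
Your proof is correct and follows essentially the same route as the paper: the same two double counts (vertex--facet incidences giving $n(n-d)$ versus $(d+1)f_d(M)$, and vertex--edge incidences giving $n(n-d-1)$ versus $d\,|E(\Lambda(M))|$), using Lemma~\ref{lem:numfacets} and the tree structure of $\Lambda(M)[V_x]$ exactly as the paper does.
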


\begin{proof}
Let $\Lambda(M)$ have $\nu$ vertices and $\varepsilon$ edges. We
count the pairs $(x,\tau)$ where $x\in V(M)$, $\tau$ is a facet of
$M$ containing $x$. Now $x\in \tau\Rightarrow \tau\backslash
\{x\}\in \lk{M}{x}$. By Lemma \ref{lem:numfacets}, we conclude
that each $x$ appears in $n-d$ facets of $M$. Thus the number of
pairs is $n(n-d)$. However since each facet of $M$ is $d$-simplex,
each facet occurs with $d+1$ values of $x$. Counting this way
gives us $\nu(d+1)$ pairs. Equating the two we have
$\nu=n(n-d)/(d+1)$. To get the number of edges in the dual graph
we count the pairs
\begin{equation}\label{eq:edgepair}
(x, \sigma\tau) \mbox{ where } x\in V(M),
\sigma\tau\in E(\Lambda(M)) \mbox{ and } x\in \sigma\cap\tau.
\end{equation}
Let $S_x$ denote the star of $x$ in $M$. Then as seen previously
$\Lambda(S_x)$ is an $(n-d)$-vertex induced tree of $\Lambda(M)$.
Now note that $(\ref{eq:edgepair})$ is equivalent to saying that
$\sigma,\tau$ are facets in $S_x$, and moreover they form an edge
in $\Lambda(S_x)$. Since $\Lambda(S_x)$ has $n-d-1$ edges, we see
that each $x$ contributes $n-d-1$ pairs.  Thus the number of pairs
is $n(n-d-1)$. However we can count differently.  Consider the
pair of facets $\{\sigma,\tau\}$ forming an edge in $\Lambda(M)$.
Then $|\sigma\cap\tau|=d$ and hence each edge occurs with $d$
values of $x$.  This gives the number of pairs as $d\varepsilon$.
Equating the two values we get $\varepsilon=n(n-d-1)/d$.
\end{proof}

\begin{cor}\label{cor:cycle} Let $M\in \lKd$ be neighborly.
Then $f_0(M)\geq 2d+1$ and the equality holds if and only if
$\Lambda(M)$ is a cycle.
\end{cor}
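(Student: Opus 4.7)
The plan is to combine the vertex and edge counts of $\Lambda(M)$ from Lemma \ref{lem:dgone} with the $2$-connectedness of $\Lambda(M)$ furnished by Lemma \ref{lem:twoconnected}. Since any $2$-connected graph on $\nu$ vertices has minimum degree at least $2$, a handshake argument gives $\varepsilon \geq \nu$, where $\varepsilon$ is its number of edges. Moreover, equality forces every vertex to have degree exactly $2$, so the (connected) graph must be a single cycle.

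First I would substitute the formulas $\nu = n(n-d)/(d+1)$ and $\varepsilon = n(n-d-1)/d$ from Lemma \ref{lem:dgone} (with $n = f_0(M)$) into the inequality $\varepsilon \geq \nu$. After clearing the positive denominators the inequality becomes $(n-d-1)(d+1) \geq (n-d)d$, which simplifies in one line to $n \geq 2d+1$. This establishes the desired lower bound on $f_0(M)$.

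For the equality clause, I would argue in both directions. If $f_0(M) = 2d+1$, then plugging back in gives $\varepsilon = \nu$, so $\Lambda(M)$ is a $2$-connected graph whose average degree equals $2$; hence every vertex has degree exactly $2$, and a connected $2$-regular graph is a cycle. Conversely, if $\Lambda(M)$ is a cycle, then $\varepsilon = \nu$ and running the computation above in reverse yields $n = 2d+1$.

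The only place requiring real input is the step $\varepsilon \geq \nu$, and this is immediate once $2$-connectedness (Lemma \ref{lem:twoconnected}) is in hand; the rest is just the algebraic manipulation of the counts from Lemma \ref{lem:dgone}. So I expect no genuine obstacle — the main thing to be careful about is cleanly invoking the minimum-degree-$2$ characterization of cycles among connected graphs to justify the equality case.
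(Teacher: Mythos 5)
Your proposal is correct and follows essentially the same route as the paper: invoke Lemma \ref{lem:twoconnected} to get minimum degree $2$ in $\Lambda(M)$, deduce $\varepsilon \geq \nu$ by handshaking, and substitute the counts from Lemma \ref{lem:dgone} to obtain $n \geq 2d+1$, with equality exactly when the $2$-connected graph is $2$-regular, i.e.\ a cycle. Your explicit treatment of both directions of the equality case is just a slightly more spelled-out version of the paper's ``$T=\emptyset$'' argument.
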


\begin{proof} Let $\nu$ and $\varepsilon$ denote the number of
vertices and edges of $\Lambda(M)$ respectively. Let ${\cal V} :=
V(\Lambda(M))$ denote the vertex set of the graph $\Lambda(M)$. By Lemma
\ref{lem:twoconnected}, $\Lambda(M)$ is $2$-connected. Thus all vertex
degrees are at least two, i.e., $d_{\Lambda(M)}(\sigma)\geq 2$ for all
$\sigma\in {\cal V}$.  Let $T=\{\sigma\in {\cal V}: d_{\Lambda(M)}(\sigma)
\geq 3\}$.  Then $\Lambda(M)$ is a cycle if and only if $T=\emptyset$. Now
we have, $2\nu \leq \sum_{\sigma\in {\cal
V}}d_{\Lambda(M)}(\sigma)=2\varepsilon$,
or $\epsilon\geq \nu$. Clearly the equality occurs
when $T=\emptyset$. Using Lemma \ref{lem:dgone} we have,
\begin{equation*}
\frac{n(n-d-1)}{d} \geq \frac{n(n-d)}{d+1}, \text{ where } n=f_0(M)
\end{equation*}
Thus $f_0(M)=n\geq 2d+1$ and equality occurs only when
$T=\emptyset$, or equivalently when $\Lambda(M)$ is a cycle.
\end{proof}

\begin{lemma}\label{lem:beta1} For $d\geq 4$, let $M\in \Kd$ be
neighborly and let $\overline{M}\in \overline{\cal K}(d+1)$ be such
that $\partial \overline{M}=M$. If $\nu$ and $\varepsilon$ denote the
number of vertices and edges of $\Lambda(\overline{M})$
respectively, then $\beta_1(M)=\varepsilon-\nu+1$.
\end{lemma}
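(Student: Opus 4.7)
My strategy is to compute $\varepsilon$ and $\nu$ explicitly as polynomials in $n := f_0(M)$ using the counting lemmas already established, then match $\varepsilon - \nu + 1$ with $\beta_1(M)$ via the equality case of the tight neighborliness bound.

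First I would note that by the remark following Proposition \ref{prop:bagchidatta}, $\overline{M}$ is a neighborly member of $\overline{\mathcal{K}}(d+1)$ with $V(\overline{M}) = V(M)$, so $f_0(\overline{M}) = n$. Applying Lemma \ref{lem:dgone} to $\overline{M}$ (i.e., with the role of $d$ in that lemma played by $d+1$) then gives
$$\nu = \frac{n(n-d-1)}{d+2}, \qquad \varepsilon = \frac{n(n-d-2)}{d+1}.$$

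Next I would carry out a short algebraic simplification: put $\varepsilon - \nu + 1$ over the common denominator $(d+1)(d+2)$. A direct expansion gives
$$(n-d-2)(d+2) - (n-d-1)(d+1) = n - (2d+3),$$
so that
$$\varepsilon - \nu + 1 \;=\; \frac{n(n - 2d - 3) + (d+1)(d+2)}{(d+1)(d+2)} \;=\; \frac{(n-d-1)(n-d-2)}{(d+1)(d+2)}.$$

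Finally I would invoke the equality case of Proposition \ref{prop:lss}: since $d \geq 4$ and $M$ is a neighborly member of $\mathcal{K}(d)$, inequality (\ref{eq:tn}) holds with equality, i.e.\ $(n-d-1)(n-d-2) = (d+1)(d+2)\,\beta_1(M)$. Substituting into the preceding display yields $\varepsilon - \nu + 1 = \beta_1(M)$, as required. The only point requiring care is recognizing the factorization $n^2 - (2d+3)n + (d+1)(d+2) = (n-d-1)(n-d-2)$ in the algebra step; once spotted, it lines up exactly with the right-hand side of the tight neighborliness identity, so the lemma is really repackaging that identity as the statement that $\beta_1(M)$ equals the cycle rank of $\Lambda(\overline{M})$.
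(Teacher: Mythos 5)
Your proof is correct and follows essentially the same route as the paper: apply Lemma \ref{lem:dgone} to the neighborly complex $\overline{M}\in\overline{\mathcal K}(d+1)$ to get $\nu$ and $\varepsilon$, and combine with the equality case of Proposition \ref{prop:lss}. The only difference is that you write out the algebraic simplification (the factorization $n^2-(2d+3)n+(d+1)(d+2)=(n-d-1)(n-d-2)$) which the paper leaves as ``it follows that.''
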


\begin{proof}
Since $M\in \Kd$ is neighborly, by Proposition \ref{prop:lss}, we have
$\beta_1(M)=\binom{n-d-1}{2}/\binom{d+2}{2}$. Since $\overline{M}\in
\overline{\mathcal K}(d+1)$, by Lemma \ref{lem:dgone} we have
$\nu=n(n-d-1)/(d+2)$, $\varepsilon=n(n-d-2)/(d+1)$, where
$n=f_0(M)=f_0(\overline{M})$.  Then it follows that
$\beta_1(M)=\varepsilon-\nu+1$.
\end{proof}

\begin{defn} {\rm Let $M$ be a neighborly member of
$\overline{\mathcal K}(d)$. A set $S\subseteq V(\Lambda(M))$ is
said to be \emph{critical} in $M$ if each of the connected components of
$\Lambda(M)-S$ contains fewer than $f_0(M)-d$ vertices. A set of
facets is said to be a \emph{cover} of $M$ if they together
contain all the vertices.  }
\end{defn}

Observe the following.

\begin{lemma}\label{lem:criticaliscover}
Let $M\in \overline{\mathcal K}(d)$ be neighborly. If $S\subseteq
V(\Lambda(M))$ is critical in $M$, then $S$ is a cover of $M$.
\end{lemma}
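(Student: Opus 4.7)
The plan is to proceed by contraposition: assume $S$ is not a cover and exhibit a connected component of $\Lambda(M)-S$ with at least $f_0(M)-d$ vertices, violating criticality.

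First I would pick a vertex $x\in V(M)$ that is witness to $S$ not being a cover, i.e.\ $x\notin\sigma$ for every facet $\sigma\in S$. Let $V_x$ denote the set of facets of $M$ containing $x$, regarded as a subset of $V(\Lambda(M))$. By assumption, $V_x\cap S=\emptyset$, so $V_x\subseteq V(\Lambda(M))\setminus S$.

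Next I would invoke Lemma \ref{lem:numfacets}: since $M$ is a neighborly member of $\overline{\mathcal K}(d)$, the set $V_x$ has exactly $f_0(M)-d$ elements, and the subgraph $\Lambda(M)[V_x]$ (which equals $\Lambda(\mathrm{st}_M(x))\cong\Lambda(\mathrm{lk}_M(x))$) is a tree, hence connected. Being a connected subgraph of $\Lambda(M)-S$, the whole set $V_x$ must lie inside a single connected component $C$ of $\Lambda(M)-S$. Therefore $|V(C)|\geq |V_x|=f_0(M)-d$, contradicting the hypothesis that every component of $\Lambda(M)-S$ has strictly fewer than $f_0(M)-d$ vertices.

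The argument is essentially a one-step connectivity observation; the only substantive ingredient is that the link of every vertex of $M$ is a stacked $(d-1)$-ball, which gives the connectedness (indeed, tree structure) of $V_x$ via Lemma \ref{lem:numfacets}. There is no real obstacle beyond ensuring that the definition of \emph{critical} is read as a strict inequality (fewer than $f_0(M)-d$), so that the single component containing $V_x$ genuinely contradicts the hypothesis.
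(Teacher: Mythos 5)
Your proof is correct and is essentially the paper's own argument: both rest on Lemma \ref{lem:numfacets}, which makes $V_x$ an induced tree on $f_0(M)-d$ vertices that, if disjoint from $S$, would sit inside a single component of $\Lambda(M)-S$ and violate criticality. The paper states this directly (``hence $V_x$ must intersect $S$'') while you phrase it contrapositively, but the content is identical.
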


\begin{proof}
Since $S$ is critical in $M$, each component of $\Lambda(M)-S$ is of size at
most $f_0(M)-d-1$. Let $x$ be an arbitrary vertex of $M$. Let $V_x$ be
the set of facets of $M$ containing $x$. By Lemma \ref{lem:numfacets},
we know that $\Lambda(M)[V_x]$ is an induced tree with $f_0(M)-d$
vertices. Hence $V_x$ must intersect $S$, or equivalently a facet in
$S$ contains $x$. Since $x$ was arbitrary, we conclude that the facets
in $S$ contain all the vertices, and hence $S$ is a cover of $M$. 
\end{proof}

\begin{lemma}\label{lem:pathlemma} Let $M\in \overline{\mathcal
K}(d)$ be neighborly with $f_0(M)>2d+1$. Let $u_0u_1\ldots u_r$ be
a path in $\Lambda(M)$. Let $x_i$ be the unique element of
$u_{i-1}\backslash u_i$ for $1\leq i\leq r$. If all the internal
vertices of the path have degree at most two in $\Lambda(M)$, then
we have the following\,:
\begin{enumerate}[{\rm (a)}]
\item $x_i\neq x_j$ for $i\neq j$.
\item $x_i\in u_0$ for all $1\leq i\leq r$.
\item $r\leq d+1$.
\end{enumerate}
\end{lemma}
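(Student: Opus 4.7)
The plan is to analyse, for each vertex $v$ of $M$, the index set $S_v := \{k \in \{0, \ldots, r\} : v \in u_k\}$ together with the tree $T_v := \Lambda(M)[V_v]$ of facets through $v$. By Lemma~\ref{lem:numfacets}, $T_v$ is a tree on exactly $n - d$ vertices, where $n = f_0(M)$; and the hypothesis that every internal vertex of the path has degree at most $2$ in $\Lambda(M)$ forces each such $u_k$ to have exactly the two neighbours $u_{k-1}, u_{k+1}$ in $\Lambda(M)$, so any off-path facet of $T_v$ can meet the path only through $u_0$ or $u_r$.

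The first step is a structural lemma on $S_v$: either (A) $S_v$ is a single interval $[a, b] \subseteq [1, r-1]$ with $b - a + 1 = n - d$, in which case $T_v$ equals the subpath $u_a, \ldots, u_b$ and has no off-path facet, or (B) every maximal interval of $S_v$ contains $0$ or $r$, so $S_v$ is a union of at most two intervals of the form $[0, b_1]$ and/or $[a_2, r]$. To prove this I would take a maximal interval $[a, b]$ of $S_v$ lying strictly inside $[1, r-1]$: then $u_a, u_b$ are internal path-vertices, their $\Lambda(M)$-neighbours $u_{a-1}, u_{b+1}$ lie outside $V_v$ by maximality, and they have no off-path $\Lambda(M)$-neighbours. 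Hence the subpath $u_a, \ldots, u_b$ is a connected component of $T_v$, and since $T_v$ itself is connected it must equal this subpath, giving $b - a + 1 = n - d$ (and ruling out a singleton, as $|T_v| = n - d \ge d + 2 > 1$).

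Part (a) then follows by applying the structural lemma to $v = x_i$ under the assumption $x_i = x_j$ with $i < j$: since $i - 1, j - 1 \in S_v$ and $i, j \notin S_v$, Case (A) forces $b = i - 1 = j - 1$ (contradicting $i < j$) and a short interval chase rules out every sub-case of Case (B). For (b), let $i^*$ be the smallest index in $\{1, \ldots, r\}$ with $x_{i^*} \notin u_0$; then $x_{i^*} \in u_{i^*-1} \setminus u_0$ forces $x_{i^*} = y_l$, where $y_l$ denotes the unique element of $u_l \setminus u_{l-1}$, for some $1 \le l < i^*$. Applying the structural lemma to $v = y_l$ with the data $l \in S_v$, $l - 1 \notin S_v$, $i^* - 1 \in S_v$, $i^* \notin S_v$ rules out each sub-case of Case (B) and forces Case (A) with $a = l$, $b = i^* - 1$, whence $i^* - l = n - d$. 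Since $n > 2d + 1$ gives $n - d \ge d + 2$, we obtain $i^* \ge l + d + 2 \ge d + 3$. But the minimality of $i^*$ combined with (a) exhibits $x_1, \ldots, x_{i^*-1}$ as $i^* - 1$ distinct elements of the $(d+1)$-set $u_0$, so $i^* - 1 \le d + 1$, a contradiction. This proves (b), and (c) follows at once: by (a) and (b), $x_1, \ldots, x_r$ are $r$ distinct elements of $u_0$.

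The main obstacle is the structural lemma itself; once it is in hand, parts (a), (b), and (c) reduce to elementary bookkeeping on intervals in $\{0, \ldots, r\}$, combined with the numerical gap $n - d \ge d + 2$ supplied by the standing assumption $n > 2d + 1$.
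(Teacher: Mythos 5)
Your proposal is correct and rests on the same two ingredients as the paper's own proof: Lemma \ref{lem:numfacets} (the facets through a vertex induce a tree on $f_0(M)-d$ vertices) together with the observation that degree-two internal path vertices confine that tree to a segment of the path, yielding a connectivity contradiction for (a) and a size contradiction for (b). You merely package this confinement as a dichotomy on the index set $S_v$ and then do interval bookkeeping, which is an organizational variant rather than a different route.
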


\begin{proof} We first prove (a). If possible, let there exist $i,j$ with
$i<j$ such that $x_i=x_j=x$. Then by definition $x\in
u_{i-1},u_{j-1}$ but $x\not\in u_i,u_j$; hence $j>i+1$. Since the
set of facets containing $x$ must induce a tree, we conclude that
there is a $u_{i-1}$-$u_{j-1}$ path in $\Lambda(M)-\{u_i,u_j\}$.
However we see that if all the internal vertices have degree at
most two, this is not possible. This proves (a).
\par\noindent{Suppose} (b) is not true. Let $i$ be the minimum
such that $x_i\not\in u_0$. As $x_1\in u_0$, we have $i>1$. By
minimality of $i$, we must have $\{x_1,\ldots,x_{i-1}\}\subseteq
u_0$. Since $|u_0|=d+1$, we see that $i\leq d+2$. But then we have
$x_i\in u_{i-1}$, $x_i\not\in u_i$ and $x_i\not\in u_0$. Let $V_i$
denote the set of facets containing $x_i$. Since $\Lambda(M)[V_i]$
is a tree, it must be contained in a unique component of
$\Lambda(M)- \{u_0,u_i\}$. Since $x_i \in u_{i-1}$,
$\Lambda(M)[V_i]$ is contained in the component of
$\Lambda(M)-\{u_0,u_i\}$ containing $u_{i-1}$, which according to
our assumptions is the path $u_1\ldots u_{i-1}$. Thus
$V_i\subseteq \{u_1,\ldots, u_{i-1}\}$. But then $|V_i|\leq
d+1<f_0(M)-d$, contradicting Lemma \ref{lem:numfacets}. This
proves (b). Since $|u_0|=d+1$, it is readily seen that (a) and (b)
yield (c). This completes the proof of the lemma.
\end{proof}

\begin{cor}\label{cor:degthreemore} For $d\geq 4$, let $M\in
\overline{\mathcal K}(d)$ be neighborly. Let $T=\{\sigma \in
V(\Lambda(M)) : d_{\Lambda(M)}(\sigma)\geq 3\}$. If $f_0(M)>2d+1$,
then $T$ is a cover of $M$.
\end{cor}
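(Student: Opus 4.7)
My plan is to deduce the corollary by combining Lemma \ref{lem:criticaliscover} (to reduce the claim to showing that $T$ is critical) with Lemma \ref{lem:pathlemma} (to bound the sizes of components of $\Lambda(M)-T$).

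First I would note that by Lemma \ref{lem:twoconnected} the dual graph $\Lambda(M)$ is $2$-connected, so its vertices outside $T$ have degree exactly two. Therefore $\Lambda(M)-T$ has maximum degree at most two, and each of its connected components is either a path or a cycle. My next step is to rule out cycle components: a cycle component $C$ would automatically be a connected component of $\Lambda(M)$ itself (the vertices in $C$ have no further $\Lambda(M)$-edges to spare), so $\Lambda(M)$ would have to equal $C$, forcing $f_0(M)=2d+1$ by Corollary \ref{cor:cycle} and contradicting the hypothesis $f_0(M)>2d+1$.

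Having reduced to path components, I would take such a component $v_1\cdots v_k$ and extend it at one end by the second $\Lambda(M)$-neighbor $u_0$ of $v_1$, which necessarily lies in $T$. The resulting simple path $u_0 v_1 \cdots v_k$ has all internal vertices of degree two in $\Lambda(M)$, so Lemma \ref{lem:pathlemma}(c) bounds its length $k$ by $d+1$. Since $f_0(M)>2d+1$ gives $f_0(M)-d\geq d+2>k$, each component of $\Lambda(M)-T$ has fewer than $f_0(M)-d$ vertices; thus $T$ is critical and hence, by Lemma \ref{lem:criticaliscover}, a cover of $M$.

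I do not expect a real obstacle here, as the result is essentially a recombination of the preceding lemmas. The one subtle point worth double-checking is the exclusion of cycle components of $\Lambda(M)-T$: this is precisely where the strict inequality $f_0(M)>2d+1$ enters, via Corollary \ref{cor:cycle}.
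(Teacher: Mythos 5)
Your proof is correct and follows essentially the same route as the paper: show each component of $\Lambda(M)-T$ is a short induced path via Lemma \ref{lem:pathlemma}, conclude $T$ is critical, and invoke Lemma \ref{lem:criticaliscover}. You merely make explicit two points the paper leaves as ``clearly'' — ruling out cycle components via $2$-connectedness and Corollary \ref{cor:cycle}, and extending each path component by a neighbor in $T$ to get the sharper bound $d+1$ — both of which are exactly the intended justifications.
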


\begin{proof} From Corollary \ref{cor:cycle},  $f_0(M)>2d+1$ implies
$T\neq \emptyset$. Clearly each component of $\Lambda(M)-T$ is an
induced path in $\Lambda(M)$. Then by Lemma \ref{lem:pathlemma},
we see that each component of $\Lambda(M)-T$ is of size at most
$d+1<f_0(M)-d$.  Hence $T$ is critical in $M$. Therefore by Lemma
\ref{lem:criticaliscover}, we conclude that $T$ is a cover of $M$.
\end{proof}

We are now in a position to give a complete proof of Theorem
\ref{thm:inequality}.

\begin{proof}[{\bf Proof (Theorem \ref{thm:inequality}):}]
If possible let $M$ be a tight neighborly $d$-manifold for $d\geq 4$
with $\beta_1=\beta_1(M)=2$. Let $n=f_0(M)$. Then we have
$\binom{n-d-1}{2}=2\binom{d+2}{2}$, or equivalently,
\begin{equation}\label{eq:tightneighborly}
(n-d-1)(n-d-2) = 2(d+2)(d+1)
\end{equation}
By Proposition \ref{prop:lss}, $M$ must be a neighborly member of $\Kd$.
Then by Proposition \ref{prop:bagchidatta}, there exists
$\overline{M}\in \overline{\cal K}(d+1)$ such that $\partial
\overline{M}=M$. Further we know that $\overline{M}$ is neighborly and
$V(\overline{M})=V(M)$. By Corollary \ref{cor:cycle}, we have
$f_0(M) = n = f_0(\overline{M}) \geq 2(d+1)+1 = 2d+3$. For $n=2d+3$,
we see that $\Lambda(\overline{M})$ is a cycle, and hence by Lemma
\ref{lem:beta1}, $\beta_1(M)=1$. Thus we can assume $n>2d+3$. Let
${\cal V}$ and ${\cal E}$ denote the vertex and edge set of
$\Lambda(\overline{M})$ respectively. Let $T\subseteq {\cal V}$ be the
set of facets of $\overline{M}$ with degree three or more in
$\Lambda(\overline{M})$. By Corollary \ref{cor:degthreemore}, $T$ is a
cover of $\overline{M}$. Since $\overline{M}$ is $(d+1)$-dimensional,
we have $n\leq |T|(d+2)$. We now estimate $|T|$. By Lemma
\ref{lem:twoconnected}, $\Lambda(\overline{M})$ is $2$-connected. Hence
$d_{\Lambda(\overline{M})}(\sigma)\geq 2$ for all $\sigma\in {\cal
V}$. Now we have,
\begin{equation*}
|T|\leq \sum_{\sigma\in {\cal V}} (d_{\Lambda(\overline{M})}(\sigma)-2) =
	\sum_{\sigma\in {\cal V}} d_{\Lambda(\overline{M})}(\sigma) -
2|{\cal V}| = 2(|{\cal E}|-|{\cal V}|).
\end{equation*}
By Lemma \ref{lem:beta1}, we have $|{\cal E}|-|{\cal V}|=\beta_1-1=1$.
Thus $|T|\leq 2$, and hence $n\leq 2(d+2)=2d+4$. But then,
\begin{equation*}
(n-d-1)(n-d-2)\leq (d+2)(d+3) < 2(d+2)(d+1)
\end{equation*}
which contradicts (\ref{eq:tightneighborly}). This completes the proof.
\end{proof}

\begin{remark} {\rm Theorem \ref{thm:inequality} shows that there does not exist
a tight neighborly triangulation with $(\beta_1,d,f_0)=(2,13,35)$,
which was one of the open cases in \cite[Section 4]{Effenberger}.
The next few triples $(\beta_1,d,f_0)$ with $\beta_1=2$ are
$(2,83,204)$ and $(2,491,1189)$, which also do not exist by
Theorem \ref{thm:inequality}. Indeed in conjunction with
Propositions \ref{prop:kalai} and \ref{prop:lss}, we get\,:}
\end{remark}

\begin{cor}
If $X$ is an $n$-vertex triangulation of $(S^{\,d-1}\!\times S^1)^{\#2}$ or
$(\TPSSD)^{\#2}$ and $d\geq 4$, then
\begin{equation*}
\binom{n-d-1}{2}\geq d^2+3d+3.
\end{equation*}
\end{cor}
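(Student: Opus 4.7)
The plan is to combine Proposition \ref{prop:lss} with Theorem \ref{thm:inequality} to turn the base inequality into a strict one. First I would note that both $(S^{d-1}\!\times S^1)^{\#2}$ and $(\TPSSD)^{\#2}$ have $\beta_1 = 2$ with $\mathbb{Z}_2$-coefficients (each summand contributes one to $\beta_1$ by a Mayer--Vietoris/connected-sum computation, regardless of orientability), so any triangulation $X$ of such a manifold has $\beta_1(X) = 2$.

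Next, applying Proposition \ref{prop:lss} directly to $X$ yields
\begin{equation*}
\binom{n-d-1}{2} \;\geq\; 2\binom{d+2}{2} \;=\; (d+1)(d+2) \;=\; d^2 + 3d + 2.
\end{equation*}
If equality held, then again by Proposition \ref{prop:lss} the complex $X$ would be a tight neighborly triangulated $d$-manifold with $\beta_1(X) = 2$. But Theorem \ref{thm:inequality} rules this out for $d \geq 4$. Hence the inequality must be strict.

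Finally, since $\binom{n-d-1}{2}$ is an integer, a strict inequality over the integer $d^2 + 3d + 2$ forces
\begin{equation*}
\binom{n-d-1}{2} \;\geq\; d^2 + 3d + 3,
\end{equation*}
as desired. There is no real obstacle here: the corollary is an immediate integer-valued sharpening of Proposition \ref{prop:lss} once Theorem \ref{thm:inequality} excludes the equality case; the only minor point to verify carefully is the $\beta_1$-computation for the non-orientable connected sum, which is standard.
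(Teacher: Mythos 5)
Your proof is correct and follows essentially the same route the paper intends: the corollary is stated as a direct consequence of Theorem \ref{thm:inequality} together with Proposition \ref{prop:lss} (with Proposition \ref{prop:kalai} playing the role of your $\beta_1$-computation for the connected sums), namely that equality in $\binom{n-d-1}{2}\geq 2\binom{d+2}{2}=d^2+3d+2$ would give a tight neighborly manifold with $\beta_1=2$, which the theorem excludes, so integrality bumps the bound to $d^2+3d+3$.
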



\section{Uniqueness of K\"{u}hnel's Tori} 
For $d\geq 2$, $d$-dimensional K\"{u}hnel's torus $K^d_{2d+3}$ \cite{K86}
is defined as the boundary of the $(d+1)$-dimensional pseudomanifold
$\overline{K}^{\,d+1}_{2d+3}$ on the vertex set $\{0,\ldots,2d+2\}$ with
facets $\{\{ i,i+1,\ldots,i+d+1 \} : 0\leq i\leq 2d+2\}$, where the
addition is modulo $2d+3$. For even $d$, $K^d_{2d+3}$ triangulates the
sphere product $S^{\,d-1}\!\times S^1$ and for odd $d$, it triangulates the
twisted product $\TPSSD$. The following result was proved in \cite{BDSB}.

\begin{prop}
For $d\geq 3$, K\"{u}hnel's torus $K^d_{2d+3}$ is the only
non-simply connected $(2d+3)$-vertex triangulated manifold of
dimension $d$.
\end{prop}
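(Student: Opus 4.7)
The plan is to treat the case $d\geq 4$, where the full toolkit of Section~1 applies; the case $d=3$ lies outside Propositions~\ref{prop:bagchidatta}--\ref{prop:kalai} and must be handled separately (e.g.\ by direct enumeration of $9$-vertex $3$-manifolds or by inheritance from \cite{BDSB}). Let $X$ be a non-simply connected $(2d+3)$-vertex triangulated $d$-manifold. The first reduction is to argue $\beta_1(X;\ZZ_2)\geq 1$, which is the single step not automatic from ``non-simply connected'' (lens spaces with odd $\pi_1$ show this in principle). Granting this, Proposition~\ref{prop:lss} applied with $f_0=2d+3$ collapses to $\binom{d+2}{2}\geq\binom{d+2}{2}\beta_1$, so $\beta_1=1$ with equality; the ``moreover'' clause then promotes $X$ to a neighborly member of $\mathcal{K}(d)$.

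Next I would pass to the neighborly $\overline X\in\overline{\mathcal{K}}(d+1)$ supplied by Proposition~\ref{prop:bagchidatta}, which satisfies $\partial\overline X=X$ and $V(\overline X)=V(X)$. Since $f_0(\overline X)=2d+3=2(d+1)+1$, Corollary~\ref{cor:cycle} forces $\Lambda(\overline X)$ to be a cycle of length $2d+3$ (by Lemma~\ref{lem:dgone}); label its facets $\sigma_0,\ldots,\sigma_{2d+2}$ in cyclic order. By Lemma~\ref{lem:numfacets} each vertex $v\in V(\overline X)$ lies in precisely $d+2$ facets forming an induced subtree of the cycle -- necessarily an arc $A_v$ of $d+2$ consecutive facets. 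The combinatorial heart is to show that the map $v\mapsto A_v$ is a bijection onto the $2d+3$ arcs of length $d+2$: setting $m_k$ equal to the number of vertices with $A_v=\{k,\ldots,k+d+1\}\pmod{2d+3}$, the identity $|\sigma_j|=d+2$ reads $\sum_{k\,:\,j\in A_k}m_k=d+2$, so subtracting consecutive instances gives $m_{k+1}=m_{k-d-1}$; since $\gcd(d+2,2d+3)=1$, this translation invariance forces every $m_k$ to coincide, hence $m_k\equiv 1$.

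Having established the bijection, I would relabel the vertex with arc $A_k$ as ``$k$''; then $\sigma_j=\{v:j\in A_v\}=\{j-d-1,\ldots,j\}\pmod{2d+3}$, and a cyclic reindexing of the facets converts this into the K\"uhnel list $\{i,\ldots,i+d+1\}$. Hence $\overline X\cong\overline K^{d+1}_{2d+3}$, and therefore $X=\partial\overline X\cong K^d_{2d+3}$. I expect the chief obstacle to be exactly the injectivity of $v\mapsto A_v$: two distinct simplicial vertices may a priori share an identical star, and ruling this out genuinely requires the $\gcd$ count above rather than a purely local combinatorial argument. The secondary technical hurdles are the passage from ``non-simply connected'' to $\beta_1(X;\ZZ_2)\neq 0$ (which the paper's short proof sidesteps by directly hypothesising $\beta_1\neq 0$) and, as noted, the low-dimensional case $d=3$.
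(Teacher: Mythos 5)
The statement you were given is the Proposition that the paper quotes from \cite{BDSB}; the paper itself gives no proof of it, and the argument you outline is, in substance, the paper's proof of the subsequent and strictly weaker Theorem, which assumes $d\geq 4$ and $\beta_1(M;\ZZ_2)\neq 0$ in place of ``non-simply connected, $d\geq 3$''. Within that weaker setting your argument is correct, and your handling of the key step genuinely differs from the paper's: where the paper proves the sets $V_i$ pairwise distinct by taking an end-vertex $\sigma$ of the common induced path and a neighbour $\tau$ off the path, so that $\{i,j\}\subseteq\sigma\setminus\tau$ contradicts $|\sigma\setminus\tau|=1$, and then gets the bijection from distinctness plus the count of $2d+3$ arcs, you instead count the multiplicities $m_k$ of the arcs, use $\sum_{k=j-d-1}^{j}m_k=d+2$ and $\gcd(d+2,2d+3)=1$ to force $m_k\equiv 1$. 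That computation is sound and delivers injectivity and surjectivity in one stroke; it is a legitimate alternative to the paper's local argument.

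As a proof of the Proposition as stated, however, there are genuine gaps, and you have named them without closing them. First, ``non-simply connected'' does not imply $\beta_1(M;\ZZ_2)\neq 0$ (your lens-space remark is exactly the obstruction), and nothing in the paper's toolkit can bridge this: Proposition~\ref{prop:lss} and everything downstream of it only see $\ZZ_2$-Betti numbers, so a hypothetical $(2d+3)$-vertex $\ZZ_2$-homology sphere with non-trivial fundamental group is invisible to the entire machinery. Excluding that possibility is precisely the extra content supplied by \cite{BDSB}, and it is why the paper deliberately weakens the hypothesis to $\beta_1\neq 0$ in its own Theorem. Second, the case $d=3$ lies outside the range of Proposition~\ref{prop:bagchidatta} and of the equality clause of Proposition~\ref{prop:lss}, and deferring it to ``direct enumeration or inheritance from \cite{BDSB}'' is circular if the goal is an independent proof of the Proposition. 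So what you have is a correct (and in one step independently argued) proof of the paper's final Theorem, not of the quoted Proposition.
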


For $d\geq 4$, the uniqueness of $K^d_{2d+3}$ was also proved in
\cite{CSE} for the bigger class of homology $d$-manifolds, but
with assumption $\beta_1\neq 0$ and $\beta_2=0$. We prove the
above result for $d\geq 4$, under the assumption $\beta_1\neq 0$.
More specifically we prove\,:

\begin{theo}
For $d\geq 4$, let $M$ be a triangulated $d$-manifold with $2d+3$
vertices and $\beta_1(M;\ZZ_2)\neq 0$. Then $M\cong K^d_{2d+3}$.
\end{theo}

\begin{proof}
By Proposition \ref{prop:lss}, we must have $\beta_1(M)=1$ and $M$ must be
tight neighborly. Therefore $M\in \Kd$. By Proposition
\ref{prop:bagchidatta}, there exists $\overline{M}\in
\overline{\cal K}(d+1)$ such that $\partial \overline{M}=M$. Let
$V(M)=V(\overline{M})=\{0,1,\ldots,2d+2\}$. Since
$f_0(\overline{M})=2d+3=2(d+1)+1$, by Corollary \ref{cor:cycle},
we conclude that $\Lambda(\overline{M})$ is a cycle with $2d+3$
vertices. Let $\sigma_0,\ldots,\sigma_{2d+2}$ be facets of
$\overline{M}$ such that $\Lambda(\overline{M})$ is the cycle
$\sigma_0\sigma_1\ldots\sigma_{2d+2}\sigma_0$. For $0\leq i\leq
2d+2$, let $V_i$ denote the set of facets of $\overline{M}$
containing the vertex $i$. By Lemma \ref{lem:numfacets}, $V_i$
induces a $(2d+3)-(d+1)=d+2$ vertex tree on
$\Lambda(\overline{M})$. In other words, $V_i$ induces a
$(d+1)$-length path on $\Lambda(\overline{M})$. There are exactly $2d+3$
induced paths of $\Lambda(\overline{M})$ with length $d+1$, namely the
paths $P_k=\sigma_{k-d-1}\sigma_{k-d}\cdots\sigma_k$ for $0\leq k\leq
2d+2$. Next we show that the sets $V_i$ are distinct for $i=0,\ldots,
2d+2$. If not, suppose $V_i=V_j$ for some $i\neq j$.  Let
$P=\Lambda(\overline{M})[V_i]=\Lambda(\overline{M})[V_j]$.  Clearly, $P$ is
an induced path. Let $\sigma$ be an end-vertex of $P$ and let $\tau$ be a
neighbor of $\sigma$ in $\Lambda(\overline{M})$, not on $P$ (such a
neighbor exists, as the degree of each vertex in $\Lambda(\overline{M})$ is
at least two). But then $\{i,j\}\subseteq \sigma\backslash \tau$, which is
not possible, as $\sigma\tau$ is an edge of $\Lambda(\overline{M})$. Thus
$V_i\neq V_j$ for $i\neq j$.  Therefore, for each $0\leq k\leq 2d+2$, there
exists a unique $l$ such that $\Lambda(\overline{M})[V_l]=P_k$. Let us
denote this association by $\varphi$, i.e., $\varphi(k)=l$ if
$\Lambda(\overline{M})[V_l] = P_k$. Then we have $\sigma_i=\{
\varphi(i),\varphi(i+1),\ldots,\varphi(i+d+1)\}$. We notice that in this
case $\varphi$ is a simplicial isomorphism from
$\overline{K}^{\,d+1}_{2d+3}$ to $\overline{M}$. Therefore $K^d_{2d+3} =
\partial \overline{K}^{\,d+1}_{2d+3} \cong \partial \overline{M} = M$.
This completes the proof.  
\end{proof}

\bigskip

\noindent{\bf Acknowledgement\,:} The author thanks the anonymous referee
for useful comments regarding the presentation of the paper. The author
also thanks Basudeb Datta and Bhaskar Bagchi for their valuable comments and
suggestions. The author would also like to thank `IISc Mathematics
Initiative' and `UGC Centre for Advanced Study' for support.

{\small  }
\end{document}